\definecolor{vegasgold}{rgb}{0.77, 0.7, 0.35}
\definecolor{darkgoldenrod}{rgb}{0.72, 0.53, 0.04}
\definecolor{gold(metallic)}{rgb}{0.83, 0.69, 0.22}
\DeclareFontFamily{U}{wncy}{}
\DeclareFontShape{U}{wncy}{m}{n}{<->wncyr10}{}
\DeclareSymbolFont{mcy}{U}{wncy}{m}{n}
\DeclareMathSymbol{\Sh}{\mathord}{mcy}{"58}
\newtheorem{theorem}{Theorem}[section]
\newtheorem{lemma}[theorem]{Lemma}
\newtheorem*{theorem*}{Theorem}
\newtheorem*{corollary*}{Corollary}
\newtheorem{definition}[theorem]{Definition}
\numberwithin{equation}{section}
\theoremstyle{remark}
\newtheorem{remark}[theorem]{Remark}
\newcommand{\Z}{\mathbb{Z}}
\newcommand{\F}{\mathbb{F}}
\newcommand{\cO}{\mathcal{O}}
\newcommand{\op}[1]{\operatorname{#1}}
\begin{document}
\title[Diophantine equations of the form $Y^n=f(X)$ over function fields]{Diophantine equations of the form $Y^n=f(X)$ over function fields}

\author[A.~Ray]{Anwesh Ray}
\address[A.~Ray]{Department of Mathematics\\
University of British Columbia\\
Vancouver BC, Canada V6T 1Z2}
\email{anweshray@math.ubc.ca}

\begin{abstract}
Let $\ell$ and $p$ be (not necessarily distinct) prime numbers and $F$ be a global function field of characteristic $\ell$ with field of constants $\kappa$. Assume that there exists a prime $P_\infty$ of $F$ which has degree $1$, and let $\cO_F$ be the subring of $F$ consisting of functions with no poles away from $P_\infty$. Let $f(X)$ be a polynomial in $X$ with coefficients in $\kappa$. We study solutions to diophantine equations of the form $Y^{n}=f(X)$ which lie in $\cO_F$, and in particular, show that if $m$ and $f(X)$ satisfy additional conditions, then there are no non-constant solutions. The results obtained apply to the study of solutions to $Y^{n}=f(X)$ in certain rings of integers in $\mathbb{Z}_{p}$-extensions of $F$ known as \emph{constant $\Z_p$-extensions}. We prove similar results for solutions in the polynomial ring $K[T_1, \dots, T_r]$, where $K$ is any field of characteristic $\ell$, showing that the only solutions must lie in $K$. We apply our methods to study solutions of diophantine equations of the form $Y^n=\sum_{i=1}^d (X+ir)^m$, where $m,n, d\geq 2$ are integers. 
\end{abstract}

\subjclass[2020]{}
\keywords{}

\maketitle
\section{Introduction}

\par Let $n\geq 2$ be an integer and $\kappa$ be a finite field of characteristic $\ell>0$. Let $F$ be a global function field with field of constants $\kappa$ and assume that there exists a prime $P_\infty$ of $F$ of degree $1$. In other words, we assume that there is a prime $P_\infty$, which is totally inert in the composite $\bar{\kappa}\cdot F$. The ring of integers $\cO_F$ shall consist of all functions $f\in F$ with no poles away from $P_\infty$. Given a polynomial $f(X)$ with coefficients in $\kappa$, we study solutions to the diophantine equation $Y^n=f(X)$ for which both $X$ and $Y$ lie in $\cO_F$. More precisely, we prove that if certain additional conditions are met, then there are no non-constant solutions, i.e., $X$ and $Y$ must both belong to the field of constants $\kappa$. Equations of the form $Y^n=f(X)$ are of significant interest. We shall apply our analysis to study a class of diophantine equations which involve perfect powers in arithmetic progressions. Let $m, n, d\geq 2$ be integers and let $r\geq 1$, then, there has been significant interest in the classification of integral solutions to the diophantine equation 
\[Y^n=(X+r)^m+(X+2r)^m+\dots+(X+dr)^m,\] cf. for instance, \cite{bennett2004diophantine,bai2013diophantine, hajdu2015conjecture, bennett2017perfect,berczes2018diophantine, koutsianas2018perfect, patel2018perfect, argaez2020perfect}.

\par We also explore themes motivated by the Iwasawa theory of function fields. Mazur initiated the Iwasawa theory of elliptic curves over number fields (cf. \cite{mazur1972rational}), which had applications to the study of growth of Mordell--Weil ranks of elliptic curves in certain infinite towers of number fields. One hopes to extend such lines of investigation to curves of higher genus (cf. \cite{ray2022rational}) and more generally, study the stability and growth of solutions to any diophantine equation in an infinite tower of global fields. In this paper, we shall study certain function field analogues of such questions, however, instead of elliptic curves, we consider the class of diophantine equations of the form $Y^n=f(X)$, where $f(X)$ has constant coefficients. Let us explain our results in greater detail. Given any integer $n$, there is a unique extension $\kappa_n/\kappa$ such that $\op{Gal}(\kappa_n/\kappa)$ is isomorphic to $\Z/n\Z$. Given a prime $p$, set $\kappa_n^{(p)}$ to denote $\kappa_{p^n}$ and set $F_n^{(p)}$ to denote the composite $F\cdot \kappa_n^{(p)}$. This gives rise to a tower of function field extensions 
\[F=F_0^{(p)}\subset F_1^{(p)}\subset \dots \subset F_n^{(p)}\subset F_{n+1}^{(p)}\subset \dots. \] Let $\Z_p$ denote the ring of $p$-adic integers, i.e., the valuation ring of $\mathbb{Q}_p$.
The \emph{constant $\Z_p$-extension} of $F$ is the infinite union \[F_\infty^{(p)}:=\bigcup_{n} F_n^{(p)}.\]It is easy to see that the Galois group $\op{Gal}\left(F_\infty^{(p)}/F\right)$ is isomorphic to $\Z_{p}$. Let $h_F$ denote the class number of $F$ (cf. \cite[Chapter 5]{rosen2002number}). Note that since $P_\infty$ is assumed to have degree $1$, it remains inert in $F_n^{(p)}$ for all $p$. Let $\cO_{\infty}^{(p)}$ (resp. $\cO_{n}^{(p)}$) be the ring of integers of $F_\infty^{(p)}$ (resp. $F_n^{(p)}$), i.e., the functions $f\in F_\infty$ with no poles away from $P_\infty$. We now state the main result.

\begin{theorem*}[Theorem \ref{thm 2.5}]
Let $\ell$ be a prime number and $F$ be a global function field of characteristic $\ell$. Let $\kappa$ be the field of constants of $F$, and let $p$ and $q$ be prime numbers that are not necessarily distinct. Assume that $q\neq \ell$. Let $f(X)$ be a polynomial with coefficients in $\kappa$ satisfying following conditions.
\begin{enumerate}
    \item The polynomial $f(X)$ factorizes into 
    \[f(X)=a_0(X-a_1)^{n_1}\dots (X-a_t)^{n_t},\] where $a_0\in \kappa$ and $a_1,\dots, a_t$ are distinct elements in $\kappa$, $n_1,\dots, n_t$ are positive integers and $t\geq 2$.
    \item At least two of the exponents $n_i$ are not divisible by $q$.
\end{enumerate} 

Then, the following assertions hold.

\begin{enumerate}
    \item\label{p1 main} Suppose that $p$ and $q$ are distinct. Then, for all sufficiently large numbers $k>0$, the only solutions $(X,Y)$ to $Y^{q^k}=f(X)$ that are contained in $\cO_\infty^{(p)}$ are constant.
    \item\label{p2 main} Suppose that $p\nmid h_F$, then the only solutions $(X,Y)$ to $Y^p=f(X)$ that are contained in $\cO_\infty^{(p)}$ are constant.
\end{enumerate}

\end{theorem*}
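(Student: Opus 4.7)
My plan is to treat the two parts by related but distinct strategies, beginning from common reductions. Any solution $(X,Y) \in \cO_\infty^{(p)}$ lies in some finite layer $\cO_n^{(p)}$; the units of $\cO_n^{(p)}$ are exactly $(\kappa_n^{(p)})^\times$ (any unit has divisor supported on the unique degree-one prime above $P_\infty$, and a degree-zero divisor on a single degree-one prime is zero); and the principal ideals $(X-a_i) \subset \cO_n^{(p)}$ are pairwise coprime, since their pairwise differences $a_j - a_i$ are elements of $\kappa^\times$.

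For Part (1) I would mount a genus argument. Let $C_k$ denote the smooth projective model over $\kappa$ of the affine curve $Y^{q^k}=f(X)$. Since at least two of the $n_i$ are coprime to $q$, the polynomial $Y^{q^k} - f(X)$ is irreducible over $\bar\kappa(X)$, so $C_k$ is geometrically irreducible. Because $q \neq \ell$, the cyclic cover $X\colon C_k \to \mathbb{P}^1$ is tame, and Riemann--Hurwitz yields
\[
2g_{C_k} - 2 \;=\; -2q^k \;+\; \sum_{i=1}^t \bigl(q^k - \gcd(q^k, n_i)\bigr) \;+\; \bigl(q^k - \gcd(q^k, \deg f)\bigr).
\]
The two $n_i$ coprime to $q$ contribute $2(q^k-1)$, and the remaining terms (at the other $a_i$ when $t \geq 3$, or at infinity when $t = 2$) are bounded below by $q^k - q^M$ for a constant $M$ independent of $k$, so $g_{C_k} \to \infty$ with $k$. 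Fix $k$ with $g_{C_k} > g(\X_F)$, where $\X_F$ is the smooth projective model of $F$. A solution in $\cO_n^{(p)}$ produces a morphism $\Spec \cO_n^{(p)} \to C_k$ that extends (by the valuative criterion and normalization of the projective closure) to a morphism of smooth projective curves over $\kappa_n^{(p)}$ from $\X_F \otimes_\kappa \kappa_n^{(p)}$ to $C_k \otimes_\kappa \kappa_n^{(p)}$. Base-changed to $\bar\kappa$, any non-constant such morphism would force $g(\X_F) \geq g_{C_k}$, contradicting our choice of $k$. Thus the morphism is constant, and $X, Y \in \kappa_n^{(p)}$.

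For Part (2) the exponent $p$ is too small for the genus argument, and I would instead exploit the class-number hypothesis. From $(Y)^p = \prod_i (X - a_i)^{n_i}$ together with coprimality, for each index $i$ with $p \nmid n_i$ the ideal $(X - a_i)$ is a $p$-th power of an ideal $J_i$ in $\cO_n^{(p)}$. An analysis of the $p$-part of $\op{Cl}(\cO_n^{(p)})$ along the constant $\Z_p$-extension (to be derived from $p \nmid h_F$) then yields $p \nmid |\op{Cl}(\cO_n^{(p)})|$ for every $n$; so each $J_i$ is principal, and we may write $X - a_i = u_i \beta_i^p$ with $u_i \in (\kappa_n^{(p)})^\times$ and $\beta_i \in \cO_n^{(p)}$. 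Subtracting for two indices $i_1, i_2$ with $p \nmid n_{i_j}$ gives
\[
a_{i_2} - a_{i_1} \;=\; u_{i_1}\beta_{i_1}^p - u_{i_2}\beta_{i_2}^p \;\in\; \kappa^\times.
\]
After passing, if necessary, to a constant extension containing $\mu_p$ and a $p$-th root $\lambda$ of $u_{i_2}/u_{i_1}$, the right-hand side factors as $\prod_{\zeta \in \mu_p}(\beta_{i_1} - \zeta\lambda\beta_{i_2})$, a non-zero constant. Each factor is regular away from places above $P_\infty$; the product being a unit in the corresponding ring of integers forces every factor to be a unit, hence a constant. A leading-coefficient comparison at $P_\infty$ shows at most one $\zeta$ can yield a constant factor, giving a contradiction unless $\beta_{i_1}$ and $\beta_{i_2}$ were already constants. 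Then $X$, and hence $Y$, is constant.

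The principal obstacle lies in Part (2): controlling $|\op{Cl}(\cO_n^{(p)})|$ uniformly in $n$ from the single datum $p \nmid h_F$. Via the identification $\op{Cl}(\cO_n^{(p)}) \cong J_{\X_F}(\kappa_n^{(p)})$ with the $\kappa_n^{(p)}$-points of the Jacobian of $\X_F$, this amounts to a Frobenius-fixed-point computation in $J_{\X_F}[p^\infty]$ that must be carried out using the structure of the constant $\Z_p$-extension or invoked from earlier material in the paper. A secondary subtlety is handling the case $\mu_p \not\subset \kappa_n^{(p)}$, which forces the final factoring step to occur over a further constant extension and requires a corresponding class-group input at that larger level.
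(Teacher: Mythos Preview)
Your proposal is correct, but it diverges from the paper's route in an interesting way. The paper proves both parts uniformly: it first establishes (Theorem~2.3) that over any fixed global function field $F'$, if $h_{F'}[q^k]=h_{F'}[q^{k-1}]$ then $Y^{q^k}=f(X)$ has only constant solutions in $\cO_{F'}$; it then invokes a theorem of Leitzel--Rosen to control class numbers along the tower (for $p\neq q$ the $q$-part of $h_{F_n^{(p)}}$ is bounded in $n$, and for $p\nmid h_F$ the $p$-part vanishes for every $n$), and applies Theorem~2.3 at each layer. The finite-level constancy step (your $\mu_p$-factoring in Part~(2)) is handled in the paper by a short valuation lemma: if $f^q-g^q=c$ with $c$ constant, the binomial expansion forces $d_{P_\infty}(f),d_{P_\infty}(g)\le 0$.

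Your Part~(1) replaces the class-number input entirely with a Riemann--Hurwitz genus computation for the superelliptic curve $C_k$, observing that $g_{C_k}\to\infty$ so that for large $k$ no nonconstant map $\X_F\otimes\kappa_n^{(p)}\to C_k$ can exist. This is a genuine alternative: it trades the Iwasawa-theoretic boundedness of $h_n(p,q)$ for elementary geometry, and in particular it works without knowing anything about class groups in the tower. The paper's approach, by contrast, is more algebraic and gives both assertions by the same mechanism. Your Part~(2) is essentially the paper's argument in outline---the class-group control you flag as the ``principal obstacle'' is exactly what the paper imports from Rosen (\cite[Proposition~11.3]{rosen2002number})---though your endgame via the factorization $\prod_{\zeta\in\mu_p}(\beta_{i_1}-\zeta\lambda\beta_{i_2})$ is a pleasant variant of the paper's binomial-expansion lemma and reaches the same conclusion.
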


As a consequence of the above result, we find that for any prime $p$, there are only finitely many numbers $n$, that are not powers of $\ell$, for which $Y^n=f(X)$ has solutions in $F_\infty^{(p)}$.
A more specific criterion applies to any function field $F$, cf. Theorem \ref{thm 2.3}. The methods used in proving the above result are applied to study another question of independent interest. Let $K$ be a field of positive characteristic $\ell$ and $A$ be the polynomial ring $K[T_1, \dots, T_n]$.

\begin{theorem*}[Theorem \ref{thm 4.1}]
With respect to above notation, let $f(X)$ be a polynomial with all of its coefficients and roots in $K$. Let $q\neq \ell$ be a prime number and assume that the following conditions are satisfied.
\begin{enumerate}
    \item $f(X)$ factorizes into 
    \[f(X)=a_0(X-a_1)^{n_1}\dots (X-a_t)^{n_t},\] where $a_0\in K$ and $a_1,\dots, a_t$ are distinct elements in $K$, $n_1,\dots, n_t$ are positive integers and $t\geq 2$.
    \item At least two of the exponents $n_i$ are not divisible by $q$.
\end{enumerate} 
Then, any solution $(X,Y)\in A^2$ to \begin{equation}\label{Y power k}
    Y^{q}=f(X)
\end{equation} is constant, i.e., $X$ and $Y$ are both in $K$.
\end{theorem*}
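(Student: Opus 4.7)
The plan is to exploit unique factorization in the polynomial ring $A = K[T_1, \dots, T_n]$, together with the coprimality of the distinct linear factors $X - a_i$ inside $A$. The first step is to observe that, for $i \neq j$, the elements $X - a_i$ and $X - a_j$ of $A$ differ by the nonzero constant $a_j - a_i \in K^\times$, so they share no nonunit common divisor. From the identity $Y^q = a_0 (X-a_1)^{n_1} \cdots (X-a_t)^{n_t}$, any prime $\pi \in A$ dividing $X - a_i$ then satisfies $q\, v_\pi(Y) = n_i\, v_\pi(X - a_i)$. When $q \nmid n_i$, primality of $q$ forces $q \mid v_\pi(X - a_i)$ for every such $\pi$. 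Since the units of $A$ are precisely $K^\times$, this yields a representation $X - a_i = c_i\, Z_i^q$ with $c_i \in K^\times$ and $Z_i \in A$.

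Next, picking two distinct indices $i, j$ with $q \nmid n_i$ and $q \nmid n_j$ (available by hypothesis) and subtracting the two resulting identities would give
\[c_i\, Z_i^q - c_j\, Z_j^q = a_j - a_i \in K^\times.\]
The crucial step is to deduce from this that $Z_i$ and $Z_j$ are forced to be constants. I plan to pass to $\overline{A} := \overline{K}[T_1, \dots, T_n]$, extract $\alpha_i, \alpha_j \in \overline{K}$ with $\alpha_i^q = c_i$ and $\alpha_j^q = c_j$, and substitute $W_i := \alpha_i Z_i$, $W_j := \alpha_j Z_j$ to rewrite the relation as $W_i^q - W_j^q = a_j - a_i$. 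Since $q \neq \ell$, the field $\overline{K}$ contains a primitive $q$-th root of unity $\zeta$, so the left-hand side factors as $\prod_{k=0}^{q-1}(W_i - \zeta^k W_j)$. In the integral domain $\overline{A}$, a product of polynomials equals a nonzero constant only if each factor is itself a nonzero constant (by total-degree considerations); comparing the factors for $k=0$ and $k=1$ and subtracting then forces $(\zeta - 1) W_j \in \overline{K}$, whence $W_j$, and then $W_i$, both lie in $\overline{K}$.

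Finally, since $Z_i \in A \cap \overline{K} = K$, one obtains $X = c_i\, Z_i^q + a_i \in K$; the relation $Y^q = f(X) \in K$ then forces $Y \in K$ by comparing total degrees in $A$. The hard part is the factorization argument in the second paragraph: it is exactly the place where both hypotheses---$q \neq \ell$ (guaranteeing a primitive $q$-th root of unity in $\overline{K}$) and the existence of two distinct $n_i$ not divisible by $q$ (providing two independent $q$-th-power representations that can be subtracted)---are used in an essential way.
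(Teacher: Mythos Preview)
Your proof is correct and tracks the paper's argument closely through the reduction to an identity $W_i^q - W_j^q \in \overline{K}^\times$: both use pairwise coprimality of the $X - a_i$ together with unique factorization in $A$ (the paper phrases this as triviality of $\op{Cl}(A)$) to write $X - a_i$ as a unit times a $q$-th power when $q \nmid n_i$, and both pass to $\overline{K}$ to absorb the unit. The only genuine divergence is in the final step. You factor $W_i^q - W_j^q = \prod_{k}(W_i - \zeta^k W_j)$ and use additivity of total degree to force each linear factor into $\overline{K}$. The paper instead appeals to its Lemma~2.1, a valuation-theoretic statement proved by a binomial-expansion argument: for any integral domain of characteristic $\ell$ equipped with a nonnegative ``degree'' function $d$ satisfying the ultrametric inequality, $f^q - g^q = c$ with $q \neq \ell$ forces $d(f), d(g) \le d(c)$; applying this with $d = d_i$ (the degree in $T_i$) for each $i$ gives $W_i, W_j \in \overline{K}$. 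Your route is slightly more self-contained for the polynomial-ring case, while the paper's Lemma~2.1 is formulated abstractly precisely so that the identical lemma also drives the global-function-field result (Theorem~2.3), where $d$ is the pole order at $P_\infty$ rather than a polynomial degree.
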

It follows from the above result that if $n>1$ is not a power of $\ell$, then $X^n=f(X)$ does not have non-constant solutions in $A$.
\par \emph{Organization:} Including the introduction, the manuscript consists of $5$ sections. In section \ref{s 2}, we prove criteria for the constancy of solutions to $Y^n=f(X)$ in global function fields $F$. The main result proven in section \ref{s 2} is Theorem \ref{thm 2.3}. In section \ref{s 3}, we extend the results in section \ref{s 2} to prove the constancy of solutions to the above equation in $\Z_p$-extensions of $F$. It is in this section that we prove the main result of the paper, i.e., Theorem \ref{thm 2.5}. In section \ref{s 4}, we prove similar results for the polynomial rings over a field. Finally, in section \ref{s 5}, we study the specific case when $f(X)=\sum_{i=1}^k (X+ir)^m$. 

\section{Constancy of solutions to $Y^n=f(X)$ in a global function field}\label{s 2}
\par In this section, we introduce basic notions and prove results about the solutions to certain diophantine equations over global function fields. Throughout this section, $\ell$ be a prime number and $A$ be an integral domain of characteristic $\ell$ with field of constants $\kappa$. We introduce the notion of a \emph{discrete valuation} on $A$.
\begin{definition}\label{d def}A function $d:A\rightarrow \Z$ is said to be a discrete valuation if the following conditions are satisfied.
\begin{enumerate}
    \item \label{d def 1} The values taken by $d$ are non-negative.
    \item Let $\mathbf{1}$ be the identity element of $A$, we have that $d(\mathbf{1})=0$.
    \item Given non-zero elements $f,g\in A$, we have that $d(fg)=d(f)+d(g)$, 
    \item $d(f+g)\leq \op{max}\{d(f),d(g)\}$,
    \item\label{d def 5} suppose that $d(f)<d(g)$, then $d(f+g)=d(f)$. 
\end{enumerate}
\end{definition}
Let $A_0$ be the subring of $A$ consisting of all elements $a\in A$ for which $d(a)\leq 0$. Given $f,g\in A$, we say that $f$ divides $g$ if $fh=g$ for some $h\in A$. It is clear that if $f$ divides $g$ then $d(f)\leq d(g)$. 
\begin{lemma}\label{lemma 2.1}
Let $q$ be a prime number such that $q\neq \ell$, and $A$ be an integral domain of characteristic $\ell$ equipped with a function $d$ satisfying the conditions \eqref{d def 1} to \eqref{d def 5} of Definition \ref{d def}. Let $f,g,c\in A$ satisfy the equation \begin{equation}\label{f g equation}f^q-g^q=c.\end{equation} Then, we have that $d(f), d(g)\leq d(c)$. In particular, $f$ and $g$ are contained in $A_0$ if $c$ is contained in $A_0$.
\end{lemma}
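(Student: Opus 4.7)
The plan is to bound $d(f)$ and $d(g)$ above by $d(c)$ through a case analysis on the relation between $d(f)$ and $d(g)$; once this is done, the final assertion is immediate, since $c \in A_0$ means $d(c) \leq 0$, and this would force $d(f), d(g) \leq 0$.

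In the easy case $d(f) \neq d(g)$, I may assume by symmetry that $d(f) > d(g)$, so that $d(f^q) = q\,d(f) > q\,d(g) = d(g^q)$. Using $d(-1) = 0$ (which follows from $(-1)^2 = \mathbf{1}$ via conditions (1)--(3) of Definition \ref{d def}), one has $d(-g^q) = d(g^q) < d(f^q)$, and applying condition (5) to $c = f^q + (-g^q)$ yields $d(c) = q\,d(f) \geq d(f) > d(g)$, which settles the case.

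The substantive case is $d(f) = d(g) =: D$. Set $h = f - g$ and factor $c = h \cdot s$ where $s = f^{q-1} + f^{q-2}g + \cdots + g^{q-1}$; I may assume $c \neq 0$ (the claim is vacuous otherwise), so $h, s \neq 0$ as well. If $d(h) = D$, then $d(s) \geq 0$ by condition (1), giving $d(c) = d(h) + d(s) \geq D$ at once. If instead $d(h) < D$, I switch to the binomial expansion
\[ c = (g+h)^q - g^q = \sum_{k=1}^{q} \binom{q}{k}\,g^{q-k}h^k. \]
Because $q \neq \ell$, the element $q \in A$ is a nonzero scalar with $d$-value $0$, so the first summand $q\,g^{q-1}h$ has $d$-value exactly $(q-1)D + d(h)$; for each $k \geq 2$, the $k$-th summand has $d$-value at most $(q-k)D + k\,d(h) < (q-1)D + d(h)$, since $D - d(h) > 0$. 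Iterated use of conditions (4) and (5) then shows $d(c) = (q-1)D + d(h) \geq D$. The main obstacle is precisely this last sub-case: the factorization $c = (f-g)\cdot s$ alone gives no useful lower bound on $d(c)$, and it is the hypothesis $q \neq \ell$ that rescues the argument by ensuring the first-order binomial term does not vanish and strictly dominates in $d$-value.
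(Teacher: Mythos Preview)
Your proof is correct and follows essentially the same route as the paper: the heart of both arguments is the binomial expansion of $(g+h)^q-g^q$ in the case $d(f)=d(g)$ with $d(f-g)<d(f)$, where the hypothesis $q\neq\ell$ guarantees that the first-order term $q\,g^{q-1}h$ survives and strictly dominates. The only cosmetic difference is that you proceed by a direct case split (including the easy cases $d(f)\neq d(g)$ and $d(h)=D$) whereas the paper argues by contradiction, which automatically lands in your Sub-case~2b via the divisibility $h\mid c$.
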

\begin{proof}
Suppose by way of contradiction that $d(f)>d(c)$, or $d(g)>d(c)$. Assume first that $d(f)>d(c)$. Set $e:=(g-f)$, from \eqref{f g equation}, we find that $e$ divides $c$. As a result, $d(e)\leq d(c)< d(f)$, hence by the property \eqref{d def 5}, we find that \[d(g)=d\left(f+e\right)=d(f).\] Therefore, we have deduced that $d(g)>d(c)$. Rewrite \eqref{f g equation} as \[(g+e)^q-g^q=c,\] and expand the left hand side of the above equation via the binomial expansion 
\[qeg^{q-1}+{q\choose 2} e^2 g^{q-2}+\dots +e^q=c.\] Note that since $d(e)<d(g)$, we find that for all $i$ such that $2\leq i\leq q$, \[d\left({q\choose i}e^i g^{q-i}\right)< d\left(qeg^{q-1}\right),\] and therefore, 
\[d(c)=d\left((g+e)^q-g^q\right)=d\left(qeg^{q-1}\right)=(q-1)d(g).\] This implies that $d(g)\leq d(c)$, a contradiction. On the other hand, if we assume that $d(g)>d(c)$ (instead of assuming that $d(f)>d(c)$), the same argument applies. 
\end{proof}
\par We shall illustrate the above result in various cases of interest. In this section, we study diophantine equations over global function fields $F$. Let $\ell$ be a prime number and denote by $\F_\ell$ the finite field with $\ell$ elements (i.e. $\Z/\ell\Z$). Fix an algebraic closure $\bar{F}$ of $F$. Let $\kappa$ be the algebraic closure of $\F_\ell$ in $F$, and let $\bar{\kappa}$ be the algebraic closure of $\kappa$ in $\bar{F}$. Set $F'$ to denote the composite of $F$ with $\bar{\kappa}$.
\par Following \cite[Chapter 5]{rosen2002number}, a \emph{prime} $v$ of $F$ is by definition the maximal ideal of a discrete valuation ring $\cO_v\subset F$ with fraction field equal to $F$. A divisor of $F$ is a finite linear combination $D=\sum_v n_v v$ of primes $v$. In the above sum, $n_v$ are all integers, and the set of primes $v$ for which $n_v\neq 0$ is referred to as the \emph{support} of $D$. Given a function $g\in F$, denote by $\op{div}(g)$ the associated principal divisor. Note that any principal divisor has degree $0$. Two divisors are considered equivalent if the differ by a principal divisor. The \emph{class group} of $F$ is the group of divisor classes of degree $0$, and has finite cardinality (cf. \cite[Lemma 5.6]{rosen2002number}). Denote by $h_F$ the \emph{class number}, i.e., the number of elements in the class group. Given a natural number $N$, denote by $h_F[N]$ the cardinality of the $N$-torsion in class group.

\par The field $F'$ is identified with the field of fractions of a projective algebraic curve $\mathfrak{X}$ over $\bar{\kappa}$. A point $w\in \mathfrak{X}(\bar{\kappa})$ is also referred to a prime of $F'$, since it corresponds to a valuation ring $\cO_w\subset F'$ with fraction field $F'$. Given a prime $w$ of $F'$ and a prime $v$ of $F$, we say that $w$ lies above (or divides) $v$ if the natural inclusion of fields $F\hookrightarrow F'$ induces an inclusion of valuation rings $\cO_v\hookrightarrow \cO_w$. Given a function $g\in F$ (resp. $g\in F'$), denote by $\op{ord}_v(g)$ (resp. $\op{ord}_w(g)$) the \emph{order of vanishing} of $g$ at $v$ (resp. $w$). We refer to $d_v(g):=-\op{ord}_v(g)$ (resp. $d_w(g):=-\op{ord}_w(g)$) the order of the pole of $g$ at $v$ (resp. $w$). Given a finite and nonempty set of primes $S$ of $F$, the ring of $S$-integers $\cO_S$ consists of all functions $g\in F$ such that $d_v(g)\leq 0$ for all primes $v\notin S$. Let $\bar{S}$ be the set of primes of $F'$ that lie above $S$. Let $A$ denote the composite $\cO_S\cdot \bar{\kappa}$. A function $g\in A$ satisfies the property that for all $w\notin \bar{S}$, $d_w(g)\leq 0$. According to our conventions, $\cO_F$ is the ring of $S$ integers where $S:=\{P_\infty\}$. Since $P_\infty$ is a prime of degree $1$, is totally inert in $F'$. By abuse of notation, the single prime in $\bar{S}$ is also denoted $P_\infty$. 
\par We list some basic properties of the function $d_w$ on $A$. The following result applies for any ring of $\bar{S}$-integers.

\begin{lemma}\label{lemma 2.2}
Let $f$ and $g$ be a functions in $A$ and $w$ be a point in $\mathfrak{X}(\bar{\kappa})$. Then, the following assertions hold.
\begin{enumerate}
    \item\label{p1} Suppose that $d_w(g)\leq 0$ for all $w\in \bar{S}$, then, $g$ is a constant function,
    \item\label{p2} $d_w(fg)=d_w(f)+d_w(g)$, 
    \item\label{p3} suppose that $d_w(f)>d_w(g)$, then, $d_w(f+g)=d_w(f)$.
\end{enumerate}
\end{lemma}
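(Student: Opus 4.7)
The plan is to treat the three assertions separately, with \eqref{p1} being the only one that requires any geometric input.

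For \eqref{p1}, I would begin by observing that by construction of $A = \cO_S \cdot \bar{\kappa}$, any $g \in A$ already has no poles outside $\bar{S}$, i.e., $d_w(g) \le 0$ for every $w \notin \bar{S}$. Combined with the hypothesis, we conclude $d_w(g) \le 0$, equivalently $\op{ord}_w(g) \ge 0$, at every $w \in \mathfrak{X}(\bar{\kappa})$. Thus $g$ is regular at every closed point of the smooth projective curve $\mathfrak{X}$ over $\bar{\kappa}$. The standard fact that a rational function on a smooth projective curve over an algebraically closed field with no poles must be constant (equivalently, the degree of any nonzero principal divisor on $\mathfrak{X}$ is zero, forcing the absence of poles to preclude the existence of zeros as well) then gives $g \in \bar{\kappa}$.

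For \eqref{p2} and \eqref{p3}, I would invoke the fact that $\op{ord}_w$ is the discrete valuation attached to the DVR $\cO_w \subset F'$, so that $d_w = -\op{ord}_w$ inherits its algebraic properties. Multiplicativity $\op{ord}_w(fg) = \op{ord}_w(f) + \op{ord}_w(g)$ gives \eqref{p2} upon negation. For \eqref{p3}, the hypothesis $d_w(f) > d_w(g)$ rewrites as $\op{ord}_w(f) < \op{ord}_w(g)$; the standard ultrametric inequality $\op{ord}_w(f+g) \ge \min\{\op{ord}_w(f), \op{ord}_w(g)\}$ becomes an equality when the two orders are distinct, so $\op{ord}_w(f+g) = \op{ord}_w(f)$, and negating yields \eqref{p3}.

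The only non-routine step is the projectivity argument in \eqref{p1}; parts \eqref{p2} and \eqref{p3} are immediate from the axioms of a discrete valuation, and in fact show that $d_w$ satisfies conditions \eqref{d def 1}--\eqref{d def 5} of Definition \ref{d def} on the subring of $F'$ where it is non-negative, which is precisely the setting required to apply Lemma \ref{lemma 2.1} later in the paper.
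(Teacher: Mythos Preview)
Your proposal is correct and follows essentially the same approach as the paper: for \eqref{p1} you combine the definition of $A$ with the hypothesis to conclude $g$ has no poles on the projective curve $\mathfrak{X}$ and is therefore constant, and for \eqref{p2}--\eqref{p3} you invoke the standard properties of the discrete valuation $\op{ord}_w$. The only cosmetic difference is in \eqref{p3}: the paper factors $f+g = f(1+g/f)$ and observes that $g/f$ vanishes at $w$, whereas you appeal directly to the sharpened ultrametric inequality; these are the same argument in slightly different dress.
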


\begin{proof}
\par Note that since $g$ is contained in $A$, $d_w(g)\leq 0$ for all points $w\notin \bar{S}$. Therefore, the assumption that $d_w(g)\leq 0$ implies that $g$ has no poles, and thus must be a constant function. This proves part \eqref{p1}.
\par Part \eqref{p2} clearly follows from the relation $\op{ord}_w(fg)=\op{ord}_w(f)+\op{ord}_w(g)$. 
\par For part \eqref{p3}, we note that $f+g=f(1+g/f)$, and since it is assumed that $d_w(f)>d_w(g)$, it follows that $g/f$ vanishes at $w$. As a result, $d_w(1+g/f)=0$ and thus, 
\[d_w(f+g)=d_w(f)+d_w(1+f/g)=d_w(f),\] which proves the result.
\end{proof}
 Recall that $P_\infty$ is a prime of degree $1$ and $\cO_F$ is the associated ring of integers in $F$.
\begin{theorem}\label{thm 2.3}
Let $\ell$ be a prime number and $F$ be a global function field of characteristic $\ell$. Let $\cO_F$ be the ring of integers of $F$. Denote by $\kappa$ the field of constants of $F$. Let $f(X)$ be a polynomial with coefficients in $\kappa$. Let $q\neq \ell$ be a prime number and let $k>0$ be the least integer such that $h_F[q^k]=h_F[q^{k-1}]$. Assume that the following conditions are satisfied.
\begin{enumerate}
    \item $f(X)$ factorizes into 
    \[f(X)=a_0(X-a_1)^{n_1}\dots (X-a_t)^{n_t},\] where $a_0\in \kappa$ and $a_1,\dots, a_t$ are distinct elements in $\kappa$, $n_1,\dots, n_t$ are positive integers and $t\geq 2$.
    \item At least two of the exponents $n_i$ are not divisible by $q$.
\end{enumerate} 
Then, any solution $(X,Y)$ to \begin{equation}\label{Y power k}
    Y^{q^k}=f(X)
\end{equation} for which $X,Y\in \cO_F$ is constant, i.e., $X$ and $Y$ are both in $\kappa$.
\end{theorem}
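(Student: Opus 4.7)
The plan is to show that $X\in\kappa$, from which $Y\in\kappa$ follows immediately: $Y^{q^k}=f(X)\in\kappa$ makes $Y$ algebraic over $\F_\ell$, hence an element of $\kappa$, the algebraic closure of $\F_\ell$ in $F$. I argue by contradiction: assume $X$ is non-constant, and work inside $A=\cO_F\cdot\bar\kappa$. Non-constancy gives $e:=d_{P_\infty}(X)>0$, and Lemma~\ref{lemma 2.2} yields $d_{P_\infty}(X-a_i)=e$ for every $i$. On the projective smooth curve with function field $F'=F\cdot\bar\kappa$, this translates into $\op{div}(X-a_i)=Z_i-eP_\infty$, where each $Z_i\geq 0$ is effective of degree $e$ and supported at finite primes; the distinctness of the $a_i$ forces $\op{supp}(Z_i)\cap\op{supp}(Z_j)=\emptyset$ for $i\neq j$.

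Next, I extract $q^k$-divisibility from $Y^{q^k}=a_0\prod_j(X-a_j)^{n_j}$. Taking $\op{ord}_w$ at any finite prime $w\in\op{supp}(Z_i)$ (necessarily for a unique $i$) gives $q^k\op{ord}_w(Y)=n_i\op{ord}_w(X-a_i)$, so for $i$ with $q\nmid n_i$ one has $q^k\mid \op{ord}_w(X-a_i)$ at every such $w$, and therefore $Z_i=q^k Z_i'$ for some effective $Z_i'$. Comparing degrees yields $q^k\mid e$; write $e=q^k e'$. Then $\op{div}(X-a_i)=q^k(Z_i'-e'P_\infty)$, exhibiting the class $[Z_i'-e'P_\infty]$ as an element of $\op{Cl}^0(F)[q^k]$.

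Here the choice of $k$ enters. By the structure theorem for finite abelian $q$-groups, the equality $h_F[q^k]=h_F[q^{k-1}]$ forces $\op{Cl}^0(F)[q^k]=\op{Cl}^0(F)[q^{k-1}]$, so $q^{k-1}(Z_i'-e'P_\infty)=\op{div}(U_i)$ for some $U_i\in F^{\times}$. Then $\op{div}(U_i^q)=\op{div}(X-a_i)$, forcing $U_i^q=\lambda_i(X-a_i)$ with $\lambda_i\in\kappa^{\times}$, and since the divisor of $U_i$ has its polar part only at $P_\infty$, we have $U_i\in\cO_F$. Pick two indices $i\neq j$ with $q\nmid n_i$ and $q\nmid n_j$, which exist by hypothesis. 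Since $q\neq\ell$ and $\bar\kappa$ is algebraically closed, select $\mu_i,\mu_j\in\bar\kappa^{\times}$ with $\mu_i^q=\lambda_i^{-1}$ and $\mu_j^q=\lambda_j^{-1}$, and set $\tilde{U}_i=\mu_iU_i$, $\tilde{U}_j=\mu_jU_j\in A$. These satisfy $\tilde{U}_i^q=X-a_i$ and $\tilde{U}_j^q=X-a_j$.

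Subtracting gives $\tilde{U}_i^q-\tilde{U}_j^q=a_j-a_i\in\bar\kappa^{\times}$. Applying Lemma~\ref{lemma 2.1} to $A$ with $d=d_{P_\infty}$ and $c=a_j-a_i$, and noting $d_{P_\infty}(c)=0$, we conclude $d_{P_\infty}(\tilde{U}_i),d_{P_\infty}(\tilde{U}_j)\leq 0$; combined with the absence of poles outside $P_\infty$ for elements of $A$, this places $\tilde{U}_i,\tilde{U}_j\in A_0=\bar\kappa$. But then $X-a_i=\tilde{U}_i^q\in\bar\kappa$, so $X\in F\cap\bar\kappa=\kappa$, contradicting $e>0$. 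Hence $X\in\kappa$ and then $Y\in\kappa$ as observed. The main obstacle will be the descent step: one must correctly read off from the class-group hypothesis that $q^{k-1}$ already annihilates the torsion class $[Z_i'-e'P_\infty]$, control the scalar $\lambda_i$ arising from the ambiguity of functions with prescribed divisor, and then absorb $\lambda_i$ by extending scalars to $\bar\kappa$ in order to invoke Lemma~\ref{lemma 2.1} on a clean difference of $q$-th powers.
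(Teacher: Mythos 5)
Your proposal is correct and follows essentially the same route as the paper: extract $q^k$-divisibility of $\op{div}(X-a_i)$ from the disjoint supports of the zeros, use $h_F[q^k]=h_F[q^{k-1}]$ to make $q^{k-1}$ times the resulting torsion class principal, absorb the constant ambiguity over $\bar{\kappa}$ to get clean $q$-th powers $\tilde{U}_i^q=X-a_i$, and conclude via Lemma \ref{lemma 2.1} and Lemma \ref{lemma 2.2}. Your write-up is in fact slightly more explicit than the paper's at the prime $P_\infty$ (the degree count giving $q^k\mid e$), which the paper leaves implicit.
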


\begin{proof}
Since the elements $a_1,\dots, a_t$ are distinct elements of $\kappa$, we find that for all $i, j$ such that $i\neq j$, $(X-a_i)-(X-a_j)=a_j-a_i$ is a non-zero element of $\kappa$. Therefore, for $i\neq j$, the divisors $\op{div}(X-a_i)$ and $\op{div}(X-a_j)$ have disjoint supports.

\par From the equation \eqref{Y power k}, we find that 
\begin{equation}\label{divisor sum equation}q^k \op{div}(Y)=\sum_{i=1}^t n_i \op{div}(X-a_i).\end{equation}
Since the divisors $\op{div}(X-a_i)$ in the above sum have disjoint supports, and therefore for all $i$, there are divisors $D_i'$ such that $q^k D_i'=n_i\op{div}(X-a_i)$. Recall that it is assumed that there are two distinct indices $i$ and $j$ such that $q\nmid n_i$ and $q\nmid n_j$. Without loss of generality, assume that $q\nmid n_1$ and $q\nmid n_2$. Therefore, there exist divisors $D_1$ and $D_2$ such that $q^k D_i=\op{div}(X-a_i)$ for $i=1,2$. The divisor classes $[D_1]$ and $[D_2]$ in the class group are in the $q^k$-torsion subgroup of the class group. Since $h_F[q^k]=h_F[q^{k-1}]$, we find that $q^{k-1}D_i$ is principal for $i=1,2$. Let $f$ and $g$ be functions in $F$ such that
\[\op{div}(f)=q^{k-1}D_1\text{ and } \op{div}(g)=q^{k-1} D_2.\] Thus we find that $u_1f^q=(X-a_1)$ and $u_2 g^q=(X-a_2)$, where $u_1$ and $u_2$ are contained in $\kappa$. Note that since $(X-a_i)$ has no poles away from $\{P_\infty\}$, the same is true for $f$ and $g$, hence, $f,g\in \cO_F$. Setting $A:=\cO_F\cdot \bar{\kappa}$, we may replace $f$ by $u_1^{1/q} f$ and $g$ by $u_2^{1/q} g$, and thus assume that $f^q=(X-a_1)$ and $g^q=(X-a_2)$ for some elements $f,g\in A$. We find that $f^q-g^q=a_2-a_1$ is a non-zero element of $\kappa$. The pair $(A,d_{P_\infty})$ satisfies the properties \eqref{d def 1} to \eqref{d def 5}, and therefore $d_{P_\infty}(f)=d_{P_\infty}(g)=0$ by Lemma \ref{lemma 2.1}. Therefore, by part \eqref{p1} of Lemma \ref{lemma 2.2}, we find that $f$ and $g$ are both in $\kappa$. Hence, $X=f^q+a_1$ is in $\kappa$ and thus so is $Y$.
\end{proof}

\begin{remark} We make the following observations.
\begin{itemize}
    \item Theorem \ref{thm 2.3} above implies that $k=1$ if $q\nmid h_F$.
    \item If the roots $a_i$ are not contained in $F$, we may base change $F$ by an extension $\kappa'$ of $\kappa$ which is generated by the roots $a_i$.
    \item Suppose that $f(X)$ satisfies the conditions of Theorem \ref{thm 2.3}. Then, since $q\nmid h_F$ for all but finitely many primes $q$, thus $Y^q=f(X)$ has no non-constant solutions in $F$ for all but finitely many primes $q$. In fact, it is easy to see that Theorem \ref{thm 2.3} implies that $Y^n=f(X)$ has no non-constant solutions for all but finitely many natural numbers $n$.
\end{itemize}
 
\end{remark}

\section{Constancy of solutions to $Y^n=f(X)$ in the constant $\Z_p$-extension of a function field}\label{s 3}
\par In this section, we apply Theorem \ref{thm 2.3} proven in the previous section, to study questions motivated by Iwasawa theory. Given primes $p$ and $q$ (not necessarily distinct) let $h_n(p,q)$ denote $\# \op{Cl}(F_n^{(p)})[q^\infty]$, the cardinality of the $q^\infty$-torsion in the class group of $F_n^{(p)}$. 

\begin{theorem}[Leitzel, Rosen]\label{rosen theorem}
Let $p$ and $q$ be (not necessarily distinct) prime numbers and $F$ be a function field of characteristic $\ell$. The following assertions hold
\begin{enumerate}
    \item\label{p1 rosen} Suppose that $p$ and $q$ are distinct. Then, as $n$ goes to infinity, the quantity $h_n(p,q)$ is bounded.
    \item\label{p2 rosen} Suppose that $p$ does not divide $h_F$. Then, $h_n(p,p)=1$ for all $n$.
\end{enumerate}
\end{theorem}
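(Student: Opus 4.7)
The plan is to translate class numbers into point counts on the Jacobian. Let $X/\kappa$ be the smooth projective curve whose function field is $F$; since the constant extension $F_n^{(p)}/F$ is everywhere unramified and only extends the field of constants from $\kappa$ to $\kappa_{p^n}$, the degree-zero class group of $F_n^{(p)}$ coincides with $\op{Jac}(X)(\kappa_{p^n})$. By Weil's theorem its order equals
\[\prod_{i=1}^{2g}\bigl(1-\alpha_i^{p^n}\bigr),\]
where $\alpha_1,\ldots,\alpha_{2g}$ are the Frobenius eigenvalues, so the whole problem reduces to controlling $v_q(1-\alpha_i^{p^n})$ as $n$ varies.

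For part (1), assume first $q\ne\ell$, so that each $\alpha_i$ is a unit in $\bar{\Z}_q$. Upon reducing to the residue field, either $\bar\alpha_i$ has order not a power of $p$ --- in which case $\bar\alpha_i^{p^n}\ne 1$ and $v_q(1-\alpha_i^{p^n})=0$ for every $n$ --- or $\bar\alpha_i^{p^a}=1$ for some minimal $a\ge 0$, and writing $\alpha_i^{p^a}=1+\pi_i$ the binomial expansion of $(1+\pi_i)^{p^{n-a}}-1$ shows that, since $p$ is a $q$-adic unit, the linear term $p^{n-a}\pi_i$ dominates, so $v_q(1-\alpha_i^{p^n})=v_q(\pi_i)$ stabilizes. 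The subsidiary case $q=\ell$ is immediate: the $\ell$-primary \'etale torsion of the Jacobian has size at most $\ell^{g'}$, where $g'$ is the $p$-rank of $X$, giving a uniform bound on $h_n(p,\ell)$.

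For part (2), assume $p\nmid h_F=\prod_i(1-\alpha_i)$. When $p\ne\ell$ the $\alpha_i$ are $p$-adic units with $\bar\alpha_i\ne 1$ in $\bar{\F}_p$; since $\bar{\F}_p^\times$ has order coprime to $p$, raising to a $p^n$-th power is injective there, so $\bar\alpha_i^{p^n}\ne 1$ and each $v_p(1-\alpha_i^{p^n})$ vanishes. When $p=\ell$ the Tate-module argument breaks down, and this is the hardest step; my plan is to work directly with the \'etale $p$-torsion $\op{Jac}(X)(\bar\kappa)[p]\cong \F_p^{g'}$, viewed as an $\F_p$-vector space on which the arithmetic Frobenius $\phi$ acts $\F_p$-linearly. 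The hypothesis $p\nmid h_F$ makes $\phi-\op{id}$ injective there; since the ambient endomorphism algebra has characteristic $p$, one has $(\phi-\op{id})^{p^n}=\phi^{p^n}-\op{id}$, and injectivity is preserved under this $p^n$-th power, so $\op{Jac}(X)(\kappa_{p^n})[p]=0$, killing the whole $p$-primary part.
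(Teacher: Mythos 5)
The paper itself does not really prove this theorem: it only cites Rosen (Theorem 11.6 and Proposition 11.3 of his book) and Leitzel. Your eigenvalue computation is therefore a genuinely different, self-contained route --- in effect you are reproving Rosen's results. Most of it is correct: the identification of the degree-zero class group of $F_n^{(p)}$ with $\op{Jac}(X)(\kappa_{p^n})$ is right (though the honest reason is Lang's theorem, i.e.\ $H^1(\kappa_{p^n},\op{Jac}(X))=0$, rather than unramifiedness of the constant extension); the dominant-linear-term estimate $v_q(1-\alpha_i^{p^n})=v_q(\pi_i)$ for $n\geq a$ when $q\neq\ell$ is correct, provided you add the remark that $\pi_i\neq 0$, which holds because $|\alpha_i|=\sqrt{\#\kappa}>1$ archimedean, so $\alpha_i$ is never a root of unity; and both halves of part (2) are fine --- in particular the case $p=\ell$, using that $\op{id}$ is central so $(\phi-\op{id})^{p^n}=\phi^{p^n}-\op{id}$ in the characteristic-$p$ endomorphism ring of $\op{Jac}(X)(\bar\kappa)[p]\cong\F_p^{g'}$, is a clean and valid argument.

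The one step that fails is the subsidiary case $q=\ell$ of part (1). You assert that the $\ell$-primary \'etale torsion of the Jacobian has size at most $\ell^{g'}$; in fact $\op{Jac}(X)(\bar\kappa)[\ell^\infty]\cong(\Q_\ell/\Z_\ell)^{g'}$ is infinite as soon as $g'\geq 1$, and even over a fixed field $\kappa_{p^n}$ the $\ell$-part of $\#\op{Jac}(X)(\kappa_{p^n})$ can exceed $\ell^{g'}$: it equals $\prod_i|1-\alpha_i^{p^n}|_\ell^{-1}$, and a single unit-root eigenvalue congruent to $1$ modulo a high power of $\ell$ already beats your bound at $n=0$. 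So the uniform bound you invoke is false, although the boundedness you need is true. The fix is to run exactly your $q\neq\ell$ argument on the unit-root eigenvalues: for those $\alpha_i$ with $v_\ell(\alpha_i)>0$ the factor $1-\alpha_i^{p^n}$ is an $\ell$-adic unit and contributes nothing, while for the eigenvalues with $v_\ell(\alpha_i)=0$ the same dichotomy applies --- either the residue $\bar\alpha_i$ has order not a power of $p$, or $\alpha_i^{p^a}=1+\pi_i$ and the linear term $p^{n-a}\pi_i$ dominates because $p\neq\ell$ is an $\ell$-adic unit --- so each $v_\ell(1-\alpha_i^{p^n})$ stabilizes and $h_n(p,\ell)$ is bounded. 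With that replacement your proof is complete.
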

\begin{proof}
For part \eqref{p1 rosen}, the result follows from \cite[Theorem 11.6]{rosen2002number}. For function fields of genus $1$, the result was proven by Leitzel, cf. \cite{leitzel1970class}. For part \eqref{p2 rosen}, the reader is referred to \cite[Proposition 11.3]{rosen2002number}. 
\end{proof}

Recall notation from the introduction. The prime $P_\infty$ is totally inert in $F_\infty^{(p)}$ for any prime $p$. We set $\cO_\infty^{(p)}$ to denote the ring of integers of $F_\infty^{(p)}$, i.e., the functions with no poles outside $\{P_\infty\}$. The following is the main result of this manuscript.

\begin{theorem}\label{thm 2.5}
Let $\ell$ be a prime number and $F$ be a global function field with field of constants $\kappa$. Let $p$ and $q$ be prime numbers that are not necessarily distinct, and assume that $q\neq \ell$. Let $f(X)$ be a polynomial with coefficients in $\kappa$ satisfying the conditions of Theorem \ref{thm 2.3}. 

Then, the following assertions hold.

\begin{enumerate}
    \item\label{p1 main} Suppose that $p$ and $q$ are distinct. Then, for all sufficiently large numbers $k>0$, the only solutions $(X,Y)$ to $Y^{q^k}=f(X)$ in $\cO_\infty^{(p)}$ are constant.
    \item\label{p2 main} Suppose that $p\nmid h_F$, then the only solutions $(X,Y)$ to $Y^p=f(X)$ in $\cO_\infty^{(p)}$ are constant.
\end{enumerate}

\end{theorem}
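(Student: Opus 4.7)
My plan is to reduce both parts to Theorem \ref{thm 2.3} applied at each intermediate layer $F_n^{(p)}$, with the uniformity across the tower supplied by Theorem \ref{rosen theorem}. Since $\cO_\infty^{(p)} = \bigcup_n \cO_n^{(p)}$, any solution $(X,Y) \in \cO_\infty^{(p)}$ lies already in some $\cO_n^{(p)}$. Each $F_n^{(p)}$ is a global function field of characteristic $\ell$ with constant field $\kappa_n^{(p)}\supseteq \kappa$, in which $P_\infty$ remains a prime of degree one; and since the coefficients and roots of $f$ still lie in the enlarged constant field, the hypotheses of Theorem \ref{thm 2.3} are inherited at every layer.

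For part \eqref{p1 main}, Theorem \ref{rosen theorem}\eqref{p1 rosen} supplies a uniform bound $h_n(p,q) \leq B$ on the order of the finite abelian $q$-group $\op{Cl}(F_n^{(p)})[q^\infty]$. Its exponent is therefore also bounded by $B$ uniformly in $n$, so there is a threshold $K$ (any $K$ with $q^{K-1} \geq B$) such that the least integer $k_n$ with $h_{F_n^{(p)}}[q^{k_n}] = h_{F_n^{(p)}}[q^{k_n-1}]$ satisfies $k_n \leq K$ for every $n$. Fix such a $k \geq K$ and a solution $(X,Y) \in \cO_n^{(p)}$ of $Y^{q^k} = f(X)$. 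Setting $Y' := Y^{q^{k-k_n}} \in \cO_n^{(p)}$ gives $(Y')^{q^{k_n}} = f(X)$, and Theorem \ref{thm 2.3} applied to $F_n^{(p)}$ forces $X, Y' \in \kappa_n^{(p)}$. Then $f(X)$ is constant and $Y \in F_n^{(p)}$ satisfies a polynomial relation over $\kappa_n^{(p)}$, so $Y$ is algebraic over $\kappa_n^{(p)}$; since $\kappa_n^{(p)}$ is algebraically closed in $F_n^{(p)}$, we obtain $Y \in \kappa_n^{(p)} \subseteq \kappa_\infty^{(p)}$.

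For part \eqref{p2 main}, Theorem \ref{rosen theorem}\eqref{p2 rosen} gives $h_n(p,p) = 1$ for every $n$, so each $\op{Cl}(F_n^{(p)})$ has trivial $p$-primary part. Applying Theorem \ref{thm 2.3} to $F_n^{(p)}$ with $p$ playing the role of $q$ (which fits the natural reading $p = q$ here, and in particular forces $p \neq \ell$), the minimal exponent becomes simply $k = 1$, and the theorem directly forces every solution of $Y^p = f(X)$ in $\cO_n^{(p)}$ into $\kappa_n^{(p)} \subseteq \kappa_\infty^{(p)}$. The only genuine obstacle across the entire argument is the organizational one in part \eqref{p1 main}: turning the layer-dependent exponent $k_n$ into a single uniform threshold — this is exactly where Theorem \ref{rosen theorem} is essential — and bridging the gap between $k_n$ and an arbitrary $k \geq k_n$ via the substitution $Y \mapsto Y^{q^{k-k_n}}$ together with algebraic closedness of the constant field.
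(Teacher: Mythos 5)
Your proof is correct and follows essentially the same route as the paper: reduce a solution in $\cO_\infty^{(p)}$ to some finite layer $\cO_n^{(p)}$, use Theorem \ref{rosen theorem} to control the $q^\infty$-torsion of the class groups uniformly in $n$, and invoke Theorem \ref{thm 2.3} at each layer (with $q=p$ and the triviality of the $p$-part in part \eqref{p2 main}). Your explicit bridge from the layer-dependent minimal exponent $k_n$ to an arbitrary $k\geq k_n$ via $Y\mapsto Y^{q^{k-k_n}}$, together with algebraic closedness of $\kappa_n^{(p)}$ in $F_n^{(p)}$, is a small point the paper leaves implicit, but it is the same argument in substance.
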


\begin{proof}
First, we consider the case when $p=q$. It follows from part \eqref{p1 rosen} of Theorem \ref{rosen theorem} that $h_{n}(p,q)$ is bounded as $n$ goes to infinity. Let $k>0$ be such that $q^k$ be larger than $\op{max} h_n(p,q)$. It follows from Theorem \ref{thm 2.3} that $Y^{q^k}=f(X)$ has no non-constant solutions in $\cO_n^{(p)}$ for all $n$, and therefore, no non-constant solutions in $\cO_\infty^{(p)}$. Hence, there are no non-constant solutions in $F_\infty$ as well.
\par Next, we consider the case when $p=q$ and that $p\nmid h_F$. Note that if $p\nmid h_F$, then by part \eqref{p2 rosen} of Theorem \ref{rosen theorem} that $ h_n(p,p)=1$ for all $n$. It follows from Theorem \ref{thm 2.3} that $Y^p=f(X)$ has no non-constant solutions in $\cO_n^{(p)}$ for all $n$, and therefore, no non-constant solutions in $\cO_\infty^{(p)}$. 
\end{proof}
\section{Constancy of solutions to $Y^n=f(X)$ in a polynomial ring in $r$-variables}\label{s 4}

\par In this section, we study solutions to equations of the form $Y^n=f(X)$ in polynomial rings over a field. Let $K$ be \emph{any} field of characteristic $\ell>0$ and $A$ be the polynomial ring $K[T_1, \dots, T_r]$. Given a polynomial $g$, let $d_i(g)$ be the degree of $g$ viewed as a polynomial in $T_i$ over the subring $K[T_1, \dots, T_{i-1}, T_{i+1}, \dots, T_r]$. The pair $(A,d_i)$ satisfies the conditions \eqref{d def 1}--\eqref{d def 5} of Definition \ref{d def}. The class group $\op{Cl}(A)$ denotes the group of equivalence classes of Weil divisors. Since $A$ is a unique factorization domain, we have that $\op{Cl}(A)=0$.
\begin{theorem}\label{thm 4.1}
With respect to above notation, let $f(X)$ be a polynomial with all of its coefficients in $K$. Let $q\neq \ell$ be a prime number and assume that the following conditions are satisfied.
\begin{enumerate}
    \item $f(X)$ factorizes into 
    \[f(X)=a_0(X-a_1)^{n_1}\dots (X-a_t)^{n_t},\] where $a_0\in K$ and $a_1,\dots, a_t$ are distinct elements in $K$, $n_1,\dots, n_t$ are positive integers and $t\geq 2$.
    \item At least two of the exponents $n_i$ are not divisible by $q$.
\end{enumerate} 
Then, any solution $(X,Y)\in A^2$ to \begin{equation}\label{Y power k}
    Y^{q}=f(X)
\end{equation} is constant, i.e., $X$ and $Y$ are both in $K$.
\end{theorem}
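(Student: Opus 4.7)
The plan is to transplant the argument of Theorem~\ref{thm 2.3} into the polynomial-ring setting, where the Iwasawa/class-group step becomes automatic: $A = K[T_1,\dots,T_r]$ is a unique factorization domain with $\op{Cl}(A) = 0$ and $A^\times = K^\times$, so the roles of the divisors $D_i$ in Theorem~\ref{thm 2.3} will be played directly by elements of $A$. The relevant valuations are the per-variable degree functions $d_i$, which the discussion preceding the statement already notes satisfy the axioms of Definition~\ref{d def}.

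First, I would exploit pairwise coprimality: since $(X - a_i) - (X - a_j) = a_j - a_i \in K^\times$ for $i \neq j$, the factors $X - a_i$ are pairwise coprime in $A$. From $Y^q = a_0\prod_i (X - a_i)^{n_i}$ together with unique factorization, every irreducible $\pi$ of $A$ divides at most one $X - a_i$, so $q\, v_\pi(Y) = n_i\, v_\pi(X - a_i)$ for the unique relevant index $i$. Whenever $q \nmid n_i$, this forces $q \mid v_\pi(X - a_i)$ for all $\pi$, hence $X - a_i = u_i h_i^q$ for some $u_i \in K^\times$ and $h_i \in A$. Selecting two such indices (guaranteed by the hypothesis), say $i = 1, 2$, and subtracting yields
\[
u_1 h_1^q - u_2 h_2^q = a_2 - a_1 \in K^\times.
\]

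The remaining task is to feed this into Lemma~\ref{lemma 2.1}, which requires an equation of the clean form $f^q - g^q = c$. To absorb the unit scalars $u_1, u_2$, I would base change to the algebraic closure $\bar K$: since $q \neq \ell$, there exist $v_i \in \bar K^\times$ with $v_i^q = u_i$, and then $f := v_1 h_1$, $g := v_2 h_2$ satisfy $f^q - g^q = a_2 - a_1$ in $\bar A := \bar K[T_1,\dots,T_r]$. Extending each $d_i$ to $\bar A$ and applying Lemma~\ref{lemma 2.1} gives $d_i(f), d_i(g) \leq d_i(a_2 - a_1) = 0$ for every $i$. Hence $f, g \in \bar K$, so $h_1 \in \bar K$, so $X - a_1 = u_1 h_1^q \in \bar K$; since $X \in A \cap \bar K = K$, we conclude $X \in K$. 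Finally $Y^q = f(X) \in K$, and one more application of the $d_i$'s (using $q \neq \ell$) forces $Y \in K$.

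The only genuine obstacle is the possibility that $u_1, u_2$ fail to be $q$-th powers inside $K$, which is exactly why the base change to $\bar K$ is unavoidable. The descent back to $K$ at the end is painless because $X$ and $Y$ are assumed from the outset to lie in $A$ and $A \cap \bar K = K$. No analogue of the hypothesis $h_F[q^k] = h_F[q^{k-1}]$ is needed, because $\op{Cl}(A) = 0$ delivers the required principality for free.
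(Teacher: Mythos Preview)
Your proposal is correct and follows essentially the same route as the paper: pass to $\bar K$, use $\op{Cl}(A)=0$ together with pairwise coprimality of the $X-a_i$ to write two of them as $q$-th powers, subtract, and apply Lemma~\ref{lemma 2.1} with the degree functions $d_i$. The paper is terser (it base-changes to $\bar K$ at the outset and simply cites the argument of Theorem~\ref{thm 2.3}), but the substance is identical.
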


\begin{proof}
Note that the algebraic closure of $K$ in $A$ is equal to $K$.
We may as well replace $K$ by its algebraic closure and assume without loss of generality the $K$ is algebraically closed and that $q\nmid n_1$ and $q\nmid n_2$. Since the class group of $A$ is trivial, the same argument as in the proof of Theorem \ref{thm 2.3} shows that $(X-a_1)=f^q$ and $(X-a_2)=g^q$ for $f,g\in A$. Therefore, we find that $f^q-g^q=a_2-a_1$, an element of $K$. Lemma \ref{lemma 2.1} then implies the $d_i(f)=d_i(g)=0$ for all $i$, hence $f,g$ are both in $K$. The result follows from this.
\end{proof}

\section{Perfect powers that are sums of powers in arithmetic progressions}\label{s 5}
In this section, we apply the results proven in previous sections to study the solutions of the diophantine equation involving perfect powers in arithmetic progressions
\begin{equation}\label{arithmetic progression}Y^n=f(X):=(X+r)^m+(X+2r)^m+\dots+ (X+dr)^m.\end{equation} Here, $m, n, r, d$ are integers such that $m, n, d\geq 2$ and $r\geq 1$. Viewing $f(X)$ as a polynomial with integral coefficients let $\Delta$ denote its discriminant.
\begin{theorem}\label{thm 5.1}
Let $F$ be a function field with characteristic $\ell\geq 5$ and field of functions $\kappa$. Let $q\neq \ell$ be a prime. Assume that the following conditions are satisfied.
\begin{enumerate}
    \item All roots of $f(X)$ are contained in $\kappa$,
    \item $\ell\nmid r$,
    \item at least one of the following conditions are satisfied:
    \begin{enumerate}
        \item $\ell\nmid \Delta$,
        \item $q>m$ and $d\not \equiv 0,\pm 1\mod{\ell}$.
    \end{enumerate}
\end{enumerate}
Let $k$ be the minimal value such that $h_F[q^k]=h_F[q^{k-1}]$. Then, there are no non-constant solutions $(X,Y)$ to 
\[Y^{q^k}=f(X)=\sum_{i=1}^d (X+ir)^m\] in $\cO_F^2$.  
\end{theorem}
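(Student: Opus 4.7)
The plan is to reduce the theorem to Theorem \ref{thm 2.3} by verifying, under the hypotheses of (3), that the polynomial $f(X) = \sum_{i=1}^{d}(X+ir)^{m}$ satisfies the two structural conditions required by that theorem: it factors over $\kappa$ as $f(X) = a_{0}\prod_{i=1}^{t}(X-a_{i})^{n_{i}}$ with $t\geq 2$ distinct $a_{i}\in\kappa$, and at least two of the exponents $n_{i}$ are coprime to $q$. Hypothesis (1) already provides the factorization, so the work is to establish the quantitative conditions on $t$ and on the $n_i$.

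The key computational input is the binomial expansion
\[
f(X) \;=\; \sum_{j=0}^{m}\binom{m}{j}\, r^{j}\, S_{j}(d)\, X^{m-j},\qquad S_{j}(d) := 1^{j}+2^{j}+\cdots+d^{j},
\]
so the leading coefficient is $d$ and the next two featured quantities are $S_{1}(d)=d(d+1)/2$ and $S_{2}(d)=d(d+1)(2d+1)/6$. The hypothesis $\ell\geq 5$ lets one divide by $2$ and $6$ freely in $\kappa$, and $\ell\nmid r$ prevents the $r^{j}$ factors from vanishing.

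Under hypothesis (3a), nonvanishing of the discriminant modulo $\ell$ forces $f$ to be separable over $\bar{\kappa}$, so $\deg f = m$ (in particular $\ell\nmid d$), every $n_{i}$ equals $1$, and $t=m\geq 2$. Each $n_{i}=1$ is coprime to the prime $q$, and Theorem \ref{thm 2.3} applies at once. Under hypothesis (3b), the inequality $q>m\geq\deg f \geq n_{i}$ gives $q\nmid n_{i}$ for every $i$, so the exponent condition is automatic. The substantive step is to show $t\geq 2$. I would argue by contradiction by supposing $t=1$, so that $f(X)=d(X-a)^{m}$ for some $a\in\kappa$. Matching the coefficients of $X^{m-1}$ and $X^{m-2}$ yields successively
\[
a\;=\;-\tfrac{r(d+1)}{2}\qquad\text{and then}\qquad \tfrac{(d+1)(2d+1)}{6}\;=\;\tfrac{(d+1)^{2}}{4}
\]
in $\kappa$, which after clearing denominators (using $\ell\nmid 12$) forces $(d-1)(d+1)\equiv 0\pmod{\ell}$. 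Combined with the necessity $\ell\nmid d$ so that $\deg f=m$, this means $t=1$ is only possible when $d\equiv 0,\pm 1\pmod{\ell}$. The exclusion in (3b) of these three residue classes therefore yields $t\geq 2$, and Theorem \ref{thm 2.3} finishes the proof.

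The main technical obstacle is the coefficient chase in case (3b), especially handling subcases where some intermediate coefficient vanishes in characteristic $\ell$; for instance, when $\ell\mid m$ the polynomial $f$ is itself a perfect $\ell$-th power $g^{\ell^{a}}$, and one should first apply the argument to the reduced polynomial $g(X)=\sum_{i=1}^{d}(X+ir)^{m/\ell^{a}}$, whose exponents are automatically multiplied by the prime-to-$q$ factor $\ell^{a}$ and so do not affect the $q$-coprimality. Each of the three exclusions $d\not\equiv 0,\pm 1\pmod{\ell}$ is precisely what rules out one mechanism by which $f$ could collapse into a single $m$-th power, which is exactly the structural condition needed to invoke Theorem \ref{thm 2.3}.
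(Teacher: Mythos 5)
Your main argument is essentially the paper's own proof: reduce to Theorem \ref{thm 2.3}; in case (3a) use separability to get all $n_i=1$ and $t\geq 2$; in case (3b) use $q>m$ to get $q\nmid n_i$ for every $i$, then suppose $t=1$, write $f(X)=d(X-a)^m$, and compare the coefficients of $X^{m-1}$ and $X^{m-2}$ to obtain $(d+1)(d-1)\equiv 0\pmod{\ell}$, contradicting $d\not\equiv 0,\pm 1\pmod{\ell}$. This is exactly the computation in the paper (the paper phrases it as $(d+1)/4=(2d+1)/6$, which after cancelling $d+1$ forces $d\equiv 1$), so on the cases the paper treats, your proposal and the paper coincide.

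Where you diverge is your closing caveat about vanishing binomial coefficients, and it deserves comment. You are right that the coefficient comparison silently divides by $\binom{m}{1}$ and $\binom{m}{2}$; the paper does this without remark, so your concern identifies a gap present in the paper's own argument, not just in yours. However, your sketched repair is incomplete. The Frobenius identity $f=g^{\ell^a}$ with $g=\sum_{i=1}^d(X+ir)^{m/\ell^a}$ is correct and does preserve $t$ and the $q$-divisibility of the exponents, but: (i) if $m$ is an exact power of $\ell$, then $g$ is linear, $t=1$ genuinely occurs, and no coefficient chase can produce a contradiction --- indeed the statement fails there, e.g.\ for $\ell=5$, $m=5$, $d=2$, $r=1$, $q=7$, $F=\F_5(T)$ one has $f(X)=(2X+3)^5$, and $X=3T^7+1$, $Y=T^5$ is a non-constant solution of $Y^{7}=f(X)$ in $\cO_F$; (ii) if $m'=m/\ell^a>1$ but $m'\equiv 1\pmod{\ell}$, then $\binom{m'}{2}\equiv 0$ and the $X^{m'-2}$ comparison still degenerates, while the next available coefficient ($j=\ell$) only reproduces $a^\ell\equiv a$ via Fermat and gives no contradiction, so additional argument would be needed. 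So treat your last paragraph as flagging a real issue (shared with the paper) rather than as a completed step; your proof of the cases the paper actually handles is fine and identical in method.
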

\begin{proof}
Following notation from the statement of Theorem \ref{thm 2.3}, we write \[f(X)=a_0\prod_{j=1}^t (X-a_i)^{n_i},\] where $a_1,\dots, a_t$ are all distinct elements of $\kappa$. The result follows from Theorem \ref{thm 2.3} provided the following conditions are satisfied
\begin{enumerate}
    \item $t\geq 2$.
    \item At least two of the exponents $n_i$ are not divisible by $q$.
\end{enumerate} 
Note that if $\ell\nmid \Delta$, then all roots of $f(X)$ are distinct in $\kappa$, hence, $n_i=1$ for all $i$ and $t=d\geq 2$. In particular, both of the above conditions are satisfied. 
\par On the other hand, assume that $q>m$. Clearly, all values $n_i$ are less than or equal to $\op{deg} f(X)\leq m$, and since $q>m$, it follows that $q\nmid n_i$ for all $i$. It suffices to check that $t\geq 2$ if $d\not \equiv 0,\pm 1 \mod{\ell}$. Suppose not, then $f(X)$ is of the form $d(X+a)^m$, for some $a\in \kappa$.
\par Expanding $f(X)=\sum_{i=1}^d (X+ir)^m$, one obtains 
\[\begin{split}
    & \sum_{i=1}^d (X+ir)^m\\
    =& \sum_{i=1}^d \sum_{j=0}^m {m\choose j} i^{j} r^{j} X^{m-j}\\
    =& \sum_{j=0}^m {m\choose j}r^{j}\left( \sum_{i=1}^d i^{j}\right)X^{m-j}\\
    =&d X^m+mr\left(\frac{d(d+1)}{2}\right)X^{m-1}+{m\choose 2}r^2\left(\frac{d(d+1)(2d+1)}{6}\right)X^{m-2}+\dots.
\end{split}\]
Since $f(X)=d(X+a)^m$ and $\ell\nmid d$, we find that 
\[a^j=r^j \left( \frac{1}{d}\sum_{i=1}^d i^{j}\right)\] for all values of $j$. In particular, we find that 
\[a=r\left(\frac{(d+1)}{2}\right) \text{ and }a^2=r^2\left(\frac{(d+1)(2d+1)}{6}\right).\]

We thus arrive at the relation 
\begin{equation}\label{above relation}a^2=r^2\left(\frac{(d+1)}{2}\right)^2=r^2\left(\frac{(d+1)(2d+1)}{6}\right).\end{equation}
The relation holds in $\Z/\ell \Z$. Since $\ell\nmid r$ and $d\nmid -1\mod{\ell}$ by assumption, we find that the above relation \eqref{above relation} gives us 
\[\frac{(d+1)}{4}=\frac{2d+1}{6}.\] This is not possible since it is assumed that $d\not\equiv 1\mod{\ell}$. Thus, Theorem \ref{thm 2.3} applies to give the result.
\end{proof}

\begin{theorem}
Let $F$ be a function field with characteristic $\ell\geq 5$ and field of functions $\kappa$. Let $q\neq \ell$ be a prime. Assume that the following conditions are satisfied.
\begin{enumerate}
    \item All roots of $f(X)$ are contained in $\kappa$,
    \item $\ell\nmid r$,
    \item at least one of the following conditions are satisfied:
    \begin{enumerate}
        \item $\ell\nmid \Delta$,
        \item $q>m$ and $d\not \equiv 0,\pm 1\mod{\ell}$.
    \end{enumerate}
\end{enumerate}
Let $p$ be any prime number. The following assertions hold. 
\begin{enumerate}
    \item Suppose that $p\neq q$. Then for all large enough values of $k>0$, there are no non-constant solutions to $Y^{q^k}=f(X)$ in $\cO_\infty^{(p)}$.
    \item Suppose that $p=q$ and $p\nmid h_F$. Then, there are no non-constant solutions to $Y^p=f(X)$ in $\cO_\infty^{(p)}$.
\end{enumerate}
\end{theorem}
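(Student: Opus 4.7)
The plan is to reduce this to a direct application of Theorem \ref{thm 2.5}, using Theorem \ref{thm 5.1} (or rather, the combinatorial heart of its proof) to verify that the specific polynomial $f(X)=\sum_{i=1}^d (X+ir)^m$ meets the hypotheses on the factorization required by Theorem \ref{thm 2.3}. Since Theorem \ref{thm 2.5} is stated for any polynomial $f(X)$ with coefficients in $\kappa$ satisfying the conditions of Theorem \ref{thm 2.3}, the task here is just to check that the running hypotheses (namely $\ell\nmid r$, together with either $\ell\nmid\Delta$ or $q>m$ with $d\not\equiv 0,\pm 1\pmod{\ell}$) transfer those conditions to this $f(X)$.

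First I would write out the factorization
\[f(X)=a_0(X-a_1)^{n_1}\cdots(X-a_t)^{n_t},\]
where $a_1,\dots,a_t$ are the distinct roots of $f(X)$ in $\kappa$. The two conditions to verify are $t\geq 2$ and that at least two of the exponents $n_i$ are coprime to $q$. This is exactly the verification already carried out in the proof of Theorem \ref{thm 5.1}: in the case $\ell\nmid\Delta$ all roots are simple so $n_i=1$ and $t=d\geq 2$; in the case $q>m$ with $d\not\equiv 0,\pm 1\pmod{\ell}$, each $n_i\leq\deg f(X)\leq m<q$ forces $q\nmid n_i$ automatically, and the ruled-out congruences on $d$ prevent $f(X)$ from degenerating into a single $m$-th power $d(X+a)^m$, via the computation comparing the coefficients of $X^{m-1}$ and $X^{m-2}$ in the binomial expansion. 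I would just refer back to this argument rather than repeat it.

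With the hypotheses of Theorem \ref{thm 2.3} verified for $f(X)$ over $\kappa$ (and hence over every $F_n^{(p)}$, since $\kappa\subset F_n^{(p)}$), Theorem \ref{thm 2.5} applies verbatim: for part (1), when $p\neq q$, it yields that for all sufficiently large $k$ the only solutions $(X,Y)\in(\cO_\infty^{(p)})^2$ to $Y^{q^k}=f(X)$ are constant; for part (2), when $p=q$ and $p\nmid h_F$, it yields that the only solutions $(X,Y)\in(\cO_\infty^{(p)})^2$ to $Y^p=f(X)$ are constant. There is no serious obstacle here beyond bookkeeping; the theorem is essentially a specialization, and the only nontrivial content, already packaged inside Theorem \ref{thm 5.1}, is the arithmetic argument ruling out $f(X)=d(X+a)^m$ in the $q>m$ branch.
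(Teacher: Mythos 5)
Your proposal matches the paper's proof: the paper likewise disposes of this theorem by noting that the verification in the proof of Theorem \ref{thm 5.1} shows $f(X)$ satisfies the hypotheses of Theorem \ref{thm 2.3}, so that Theorem \ref{thm 2.5} applies directly. Your write-up simply makes that same reduction explicit.
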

\begin{proof}
It follows from the proof of Theorem \ref{thm 5.1} that the conditions of Theorem \ref{thm 2.5} are satisfied, and thus the result follows.
\end{proof}

\begin{theorem}
Let $K$ be a field of characteristic $\ell\geq 5$ and let $A$ be the polynomial ring $K[T_1, \dots, T_r]$. Let $q\neq \ell$ be a prime. Assume that the following conditions are satisfied.
\begin{enumerate}
    \item All roots of $f(X)$ are contained in $K$,
    \item $\ell\nmid r$,
    \item at least one of the following conditions are satisfied:
    \begin{enumerate}
        \item $\ell\nmid \Delta$,
        \item $q>m$ and $d\not \equiv 0,\pm 1\mod{\ell}$.
    \end{enumerate}
\end{enumerate}
Then, any solution $(X,Y)\in A^2$ to \begin{equation}\label{Y power k}
    Y^{q}=f(X)
\end{equation} is constant, i.e., $X$ and $Y$ are both in $K$.
\end{theorem}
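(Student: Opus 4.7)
The plan is to invoke Theorem~\ref{thm 4.1} directly, applied to the polynomial $f(X)=\sum_{i=1}^d(X+ir)^m$ regarded as an element of $K[X]$. It will suffice to verify the two combinatorial hypotheses of that theorem: that $f(X)$ factors as $a_0\prod_{i=1}^t(X-a_i)^{n_i}$ with the $a_i$ distinct in $K$ and $t\geq 2$, and that at least two of the exponents $n_i$ are coprime to $q$. The assumption that all roots of $f(X)$ lie in $K$ supplies such a factorization for free, so only the assertions on $t$ and the $n_i$ remain to be checked.

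I would handle the two subcases of hypothesis (3) separately, but in both the argument is essentially the polynomial computation already carried out in the proof of Theorem~\ref{thm 5.1}. In case (a), $\ell\nmid\Delta$ forces $f(X)$ to have no repeated roots in $K$, so $n_i=1$ for every $i$ (making $q\nmid n_i$ automatic) and $t=\deg f=m\geq 2$. In case (b), $q>m$ combined with $n_i\leq\deg f\leq m$ forces $n_i<q$, hence $q\nmid n_i$ for every $i$, and only $t\geq 2$ remains. Assuming for contradiction that $t=1$ gives $f(X)=d(X+a)^m$ for some $a\in K$; comparing the coefficients of $X^{m-1}$ and $X^{m-2}$ with the binomial expansion of $\sum_{i=1}^d(X+ir)^m$ computed in the proof of Theorem~\ref{thm 5.1} yields the relation
\[\left(\frac{d+1}{2}\right)^2 = \frac{(d+1)(2d+1)}{6} \pmod{\ell},\]
and the assumptions $\ell\nmid r$ and $d\not\equiv -1\pmod{\ell}$ allow us to cancel $r^2$ and $(d+1)$, reducing this to $d\equiv 1\pmod{\ell}$, contradicting the hypothesis $d\not\equiv 1\pmod{\ell}$. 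In either case both hypotheses of Theorem~\ref{thm 4.1} are satisfied.

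Once this verification is complete, Theorem~\ref{thm 4.1} applies and immediately yields $X,Y\in K$. The main obstacle is essentially cosmetic: the entire combinatorial content is purely a statement about the coefficients of $\sum_{i=1}^d(X+ir)^m$ as a polynomial over $K$, already established in the proof of Theorem~\ref{thm 5.1}, and the transition from the function-field setting to the polynomial ring $A=K[T_1,\dots,T_r]$ is absorbed entirely by using Theorem~\ref{thm 4.1} in place of Theorem~\ref{thm 2.3} as the constancy input. Thus the proof should reduce to one or two lines citing the coefficient comparison from Theorem~\ref{thm 5.1} and invoking Theorem~\ref{thm 4.1}.
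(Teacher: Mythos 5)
Your proposal is correct and matches the paper's proof, which likewise just observes that the verification carried out in the proof of Theorem~\ref{thm 5.1} (distinct roots when $\ell\nmid\Delta$; $q\nmid n_i$ from $q>m$ plus the coefficient comparison ruling out $f(X)=d(X+a)^m$ when $d\not\equiv 0,\pm1\bmod\ell$) establishes the hypotheses of Theorem~\ref{thm 4.1}, and then invokes that theorem. The only cosmetic difference is that you spell out the coefficient computation rather than citing it, and you note $t=\deg f=m\geq 2$ where the paper writes $t=d\geq 2$; either bound suffices.
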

\begin{proof}
It follows from the proof of Theorem \ref{thm 5.1} that the conditions of Theorem \ref{thm 4.1} are satisfied. The result thus follows from Theorem \ref{thm 4.1}.
\end{proof}

\bibliographystyle{alpha}
\bibliography{references}
\end{document}